\newtheorem{theorem}{Theorem}[section]
\newtheorem{corollary}[theorem]{Corollary}
\numberwithin{equation}{section}
\journal{}
\begin{document}

\begin{frontmatter}

\title{Convolution formulas for multivariate arithmetic Tutte polynomials}

\author{Tianlong Ma,  \ \ Xian'an Jin\footnote{Corresponding author.},\ \ Weiling Yang\\
\small School of Mathematical Sciences\\[-0.8ex]
\small Xiamen University\\[-0.8ex]
\small P. R. China\\
\small\tt Email: tianlongma@aliyun.com, xajin@xmu.edu.cn, ywlxmu@163.com}

\begin{abstract}
The multivariate arithmetic Tutte polynomial of arithmetic matroids is a generalization of the multivariate Tutte polynomial of matroids. In this note, we give the convolution formulas for the multivariate arithmetic Tutte polynomial of the product of two arithmetic matroids. In particular,
the convolution formulas for the multivariate arithmetic Tutte polynomial
of an arithmetic matroid are obtained. Applying our results,
several known convolution formulas including \cite[Theorem 10.9 and Corollary 10.10]{Dupont} and \cite[Theorems 1 and 4]{Backman} are proved by a purely combinatorial
proof.  The proofs presented here are significantly shorter than the previous ones.
In addition, we obtain a convolution formula for the characteristic polynomial of an arithmetic matroid. 
\end{abstract}

\begin{keyword}
matroid; arithmetic matroid; multivariate Tutte polynomial; convolution formula
\MSC[2020] 05C31\sep 05B35
\end{keyword}

\end{frontmatter}

\section{Introduction}
Let $M=(X,rk)$ denote a matroid on ground set $X$ with rank function $rk$. For each $e\in X$, let $v_e$ be a variable of $e$, and for $A\subseteq X$, let $\mathbf{v}_A=\{v_e:e\in A\}$. In particular, we write $\mathbf{v}$ for $\mathbf{v}_X$. In \cite{Sokal}, Sokal defined the
following multivariate version of the Tutte polynomial of a matroid $M=(X,rk)$ in the variables $q^{-1}$ and $\mathbf{v}$: 
\begin{align}\label{equ1.1}
	Z_{M}(q,\mathbf{v})=\sum_{A\subseteq X}q^{-rk(A)}\prod_{e\in A}v_e. 
\end{align}
In particular, if $v_e=v$ for each $e\in X$ in a matroid $M=(X,rk)$, then we write $Z_{M}(q,v)$ for $Z_{M}(q,\mathbf{v})$.

If we substitute $q$ and the variable $v_e$ of each $e\in X$ for $(x-1)(y-1)$ and $y-1$ respectively, and multiply by a prefactor $(x-1)^{r(M)}$ into Eq. (\ref{equ1.1}),
we obtain the standard bivariate Tutte polynomial:
\[T_{M}(x,y)=\sum_{A\subseteq X}(x-1)^{rk(X)-rk(A)}(y-1)^{|A|-rk(A)}.\]

A triple $\mathcal{M}=(X,rk,m)$ is called a \textit{multiplicity matroid}, introduced by Moci in \cite{Moci}, if $(X,rk)$ is a matroid and $m$ is a function (called multiplicity) from the family of all subsets of $X$ to the positive integers, that is, $m: 2^{X}\rightarrow \mathbb{Z}_{> 0}$.
We say that $m$ is \textit{trivial} if it is identically equal to $1$.

A multiplicity matroid $\mathcal{M}=(X,rk,m)$ is called an \textit{arithmetic matroid}, introduced by D'Adderio and Moci in \cite{D'AdderioMoci}, if $m$ satisfies the following axioms:
\begin{description}
	\item[(1)] For all $A\subseteq X$ and $e \in X$, if $rk(A\cup \{e\})=rk(A)$, then $m(A\cup \{e\})$ divides $m(A)$; otherwise, $m(A)$ divides $m(A\cup \{e\})$.
	\item[(2)] If $A\subseteq B \subseteq X$ and $B$ is a disjoint union $B=A\cup F\cup T$ such that for all $A\subseteq C\subseteq B$ we have $rk(C)=rk(A)+|C\cap  F|$, then
	$m(A)\cdot m(B)=m(A\cup F)\cdot m(A\cup T).$
	\item[(3)]  If $A\subseteq B \subseteq X$ and $rk(A)=rk(B)$, then
	\[\sum_{A\subseteq T\subseteq B}(-1)^{|T|-|A|}m(T)\geq 0.\]
	\item[(4)]  If $A\subseteq B \subseteq X$ and $rk^{*}(A)=rk^{*}(B)$, then
	\[\sum_{A\subseteq T\subseteq B}(-1)^{|T|-|A|}m(X\setminus T)\geq 0,\]
	where $rk^{*}(A)=|A|+rk(X\setminus A)-rk(X)$ and $rk^{*}(B)$ is similar. 
\end{description}

In \cite{Branden}, 
Br\"{a}nd\'{e}n and Moci generalized the
multivariate Tutte polynomial from matroids to arithmetic matroids. 
The \emph{multivariate arithmetic Tutte polynomial} of an arithmetic matroid  $\mathcal{M}=(X,rk,m)$ is defined by
\begin{align}\label{equ1.2}
	\mathcal{Z}_{\mathcal{M}}(q,\mathbf{v})=\sum_{A\subseteq X}m(A)q^{-rk(A)}\prod_{e\in A}v_e. 
\end{align}
In particular, if $v_e=v$ for each $e\in X$ in a multiplicity matroid $\mathcal{M}=(X,rk,m)$, then we write $\mathcal{Z}_{\mathcal{M}}(q,v)$ for $\mathcal{Z}_{\mathcal{M}}(q,\mathbf{v})$.

If we substitute $q$ and the variable $v_e$ of each $e\in X$ for $(x-1)(y-1)$ and $y-1$ respectively, and multiply by a prefactor $(x-1)^{r(\mathcal{M})}$ into Eq. (\ref{equ1.2}),
we obtain the \emph{arithmetic Tutte polynomial}
\[\mathfrak{M}_{\mathcal{M}}(x,y)=\sum_{A\subseteq X}m(A)(x-1)^{rk(X)-rk(A)}(y-1)^{|A|-rk(A)},\]
introduced by D'Adderio and Moci in \cite{D'AdderioMoci}. If $\mathcal{M}$ is only a multiplicity matroid, $\mathfrak{M}_{\mathcal{M}}(x,y)$ is called the \emph{multiplicity Tutte polynomial} of $\mathcal{M}$, introduced by  Moci in \cite{Moci}.

Kook, Reiner and Stanton \cite{Kook} and Etienne and Las Vergnas\cite{Etienne} found a well-known convolution formula for the Tutte polynomial $T_{M}(x,y)$ of a matroid $M$: 
\[T_{M}(x,y)=\sum_{A\subseteq X} T_{M/A}(x,0)T_{M|A}(0,y),\]
where $M/A$ and $M|A$ denote the contraction and restriction of $A$ from $M$, respectively. Kung \cite{Kung} generalized this formula to  subset-corank  polynomials which are related to
multivariate Tutte polynomial.

Motivated by the work of Kung in \cite{Kung}, we first obtain the convolution formulas for multivariate arithmetic Tutte polynomials of the product of two arithmetic matroids in this note. In particular,
the convolution formulas for the multivariate arithmetic Tutte polynomial
of an arithmetic matroid are given. Secondly, applying our results, several known convolution formulas are proved by a purely combinatorial
method. 
Finally, we obtain a convolution formula for the characteristic polynomial of an arithmetic matroid.

\section{Main results }
Let $\mathcal{M}=(X,rk,m)$ be a multiplicity matroid. For $T\subseteq X$, the restriction and contraction of $T$ from $\mathcal{M}$ were given in \cite{D'AdderioMoci}. 

The multiplicity matroid on $T$ with the rank function and the multiplicity obtained by restricting $rk$ and $m$ to subsets of $T$ respectively, denoted by $\mathcal{M}|T$, is called the \textit{restriction} of $\mathcal{M}$ to $T$.
The \emph{contraction} of $T$ from $\mathcal{M}$, denoted by $\mathcal{M}/T$, is the multiplicity matroid $(X\setminus T, rk_{\mathcal{M}/T}, m_{\mathcal{M}/T})$, where $rk_{\mathcal{M}/T}$ and $m_{\mathcal{M}/T}$ are defined by 
$$rk_{\mathcal{M}/T}(A)=rk(A\cup T)-rk(T)$$
and $$m_{\mathcal{M}/T}(A)=m(A\cup T)$$ 
for $A\subseteq X\setminus T$. 

Let $M=(X,rk)$ be a matroid. Recall that $\mathbf{v}_A=\{v_e:e\in A\}$. Let $u_e$ be another variable of $e\in A$ and $\mathbf{u}_A=\{u_e:e\in A\}$  for $A\subseteq X$. We define $(\mathbf{u}\mathbf{v})_{A}=\{u_ev_e:e\in A\}$ for $A\subseteq X$, and write $\mathbf{u}\mathbf{v}$ for 
$(\mathbf{u}\mathbf{v})_{X}$. 

For two multiplicity matroids $\mathcal{M}_1=(X,rk,m_1)$ and $\mathcal{M}_2=(X,rk,m_2)$ over a fixed underlying matroid $(X,rk)$, the \emph{product} of  $\mathcal{M}_1$ and  $\mathcal{M}_2$, denoted by $\mathcal{M}_1\bullet\mathcal{M}_2$, is defined by $(X,rk,m_1m_2)$, where $m_1m_2$ is the product of two multiplicity functions $m_1$ and $m_2$, given by $$m_1m_2(A)=m_1(A)m_2(A)$$ 
for $A\subseteq X$. 

Delucchi and Moci \cite{Delucchi} proved that
if both $\mathcal{M}_1$ and $\mathcal{M}_2$ are two arithmetic matroids, then $\mathcal{M}_1\bullet\mathcal{M}_2$ is also an arithmetic matroid. 

We first have the following convolution formula for the multivariate arithmetic Tutte polynomial of the product of two arithmetic matroids. 
\begin{theorem}\label{mtpc0}
	Let $\mathcal{M}_1=(X,rk,m_1)$ and $\mathcal{M}_2=(X,rk,m_2)$ be two arithmetic matroids, and let $\mathcal{M}=\mathcal{M}_1\bullet\mathcal{M}_2$. Then
	\begin{align*}
		\mathcal{Z}_{\mathcal{M}}(pq,\mathbf{uv})=&\sum_{T\subseteq X}p^{-rk(T)}\prod_{e\in T}(-u_e)\mathcal{Z}_{\mathcal{M}_1|T}(q,-\mathbf{v})\mathcal{Z}_{\mathcal{M}_2/T}(p,\mathbf{u})\\
		=&\sum_{T\subseteq X}p^{-rk(T)}\prod_{e\in T}(-u_e)\mathcal{Z}_{\mathcal{M}_2|T}(q,-\mathbf{v})\mathcal{Z}_{\mathcal{M}_1/T}(p,\mathbf{u}).
	\end{align*}
\end{theorem}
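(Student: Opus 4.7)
The plan is to prove the first equality by a direct expansion of the right-hand side, a change of summation variable, and an application of the basic inclusion--exclusion identity $\sum_{B\subseteq T\subseteq A}(-1)^{|T|}=0$ whenever $B\subsetneq A$. The second equality will then come essentially for free, since $\mathcal{M}_1\bullet\mathcal{M}_2=\mathcal{M}_2\bullet\mathcal{M}_1$, so we can simply interchange the roles of $m_1$ and $m_2$ on the right-hand side.

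First I will substitute the definitions of the two factors appearing on the right. Using the definition of $\mathcal{Z}_{\mathcal{M}_1|T}$ over $T$ and of $\mathcal{Z}_{\mathcal{M}_2/T}$ over $X\setminus T$, together with $m_{\mathcal{M}_2/T}(C)=m_2(C\cup T)$ and $rk_{\mathcal{M}_2/T}(C)=rk(C\cup T)-rk(T)$, the right-hand side becomes the triple sum
\[
\sum_{T\subseteq X}\ \sum_{B\subseteq T}\ \sum_{C\subseteq X\setminus T} p^{-rk(C\cup T)}\,q^{-rk(B)}\,m_1(B)\,m_2(C\cup T)\prod_{e\in T}(-u_e)\prod_{e\in B}(-v_e)\prod_{e\in C}u_e,
\]
after the $p^{-rk(T)}$ and $p^{-(rk(C\cup T)-rk(T))}$ factors combine into $p^{-rk(C\cup T)}$.

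Next I will reindex. Setting $A:=T\cup C$, the data $(T,B,C)$ with $B\subseteq T$ and $C\subseteq X\setminus T$ is equivalent to a triple $(B,T,A)$ with $B\subseteq T\subseteq A\subseteq X$, via $C=A\setminus T$. I will reorder the summation to first fix $B\subseteq A$ and then sum over $T$ with $B\subseteq T\subseteq A$. The only factor depending on $T$ is the product of signs, which simplifies via
\[
\prod_{e\in T}(-u_e)\prod_{e\in A\setminus T}u_e=(-1)^{|T|}\prod_{e\in A}u_e,
\]
so the inner sum is $\prod_{e\in A}u_e\cdot\sum_{B\subseteq T\subseteq A}(-1)^{|T|}$, which vanishes unless $A=B$ by inclusion--exclusion and equals $(-1)^{|A|}\prod_{e\in A}u_e$ when $A=B$. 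Combined with the $\prod_{e\in B}(-v_e)=(-1)^{|A|}\prod_{e\in A}v_e$ already present, the two $(-1)^{|A|}$ factors cancel, and the surviving terms collapse to
\[
\sum_{A\subseteq X}m_1(A)m_2(A)\,(pq)^{-rk(A)}\prod_{e\in A}u_ev_e=\mathcal{Z}_{\mathcal{M}}(pq,\mathbf{uv}),
\]
which is exactly the left-hand side. I do not anticipate any serious obstacle: the entire argument is purely algebraic bookkeeping, the only nontrivial step being the observation that the alternating inner sum over $T$ forces $A=B$ in the reindexed triple sum. The symmetry of the claim in $\mathcal{M}_1,\mathcal{M}_2$ then yields the second equality without further work.
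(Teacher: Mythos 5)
Your proof is correct and is essentially the same argument as the paper's: both hinge on the inclusion--exclusion identity $\sum_{B\subseteq T\subseteq A}(-1)^{|T|-|B|}=[A=B]$ applied to the triple sum over $B\subseteq T\subseteq A$, with the second equality following from $\mathcal{M}_1\bullet\mathcal{M}_2=\mathcal{M}_2\bullet\mathcal{M}_1$. The only difference is direction --- you expand the right-hand side and collapse it to the left, while the paper inserts the delta-function sum into the left-hand side and rearranges --- which is an immaterial distinction.
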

\begin{proof}
	For two subsets $A$ and $B$ of $X$, we have 
	\[\sum_{T:B\subseteq T\subseteq A}(-1)^{|T|-|B|}=
	\begin{cases}
		1, & \text{if } A=B,\\
		0, & \text{otherwise. }
	\end{cases}\]
	Therefore, we have
	\begin{align*}
		\mathcal{Z}_{\mathcal{M}}(pq,\mathbf{uv})&=\sum_{A,B:B\subseteq A\subseteq X}m_1(B)m_2(A)p^{-rk(A)}q^{-rk(B)}\left(\prod_{e\in A}u_e\right)\left(\prod_{e\in B}v_e\right)\sum_{T: B\subseteq T\subseteq A}(-1)^{|T|-|B|}\\
		&=\sum_{T: T\subseteq X}(-1)^{|T|}p^{-rk(T)}\left(\prod_{e\in T}u_e\right)\left(\sum_{B: B\subseteq T}(-1)^{|B|}m_{1}(B)q^{-rk(B)}\prod_{e\in B}v_e\right)\cdot\\
		&\ \ \ \left(\sum_{A: T\subseteq A\subseteq X}m_2(A)p^{-rk_{M/T}(A\setminus T)}\prod_{e\in A\setminus T}u_e\right)\\
		&=\sum_{T: T\subseteq X}p^{-rk(T)}\left(\prod_{e\in T}(-u_e)\right)\left(\sum_{B: B\subseteq T}m_{1}(B)q^{-rk(B)}\prod_{e\in B}(-v_e)\right)\cdot\\
		&\ \ \ \left(\sum_{A: T\subseteq A\subseteq X}m_2(A)p^{-rk_{M/T}(A\setminus T)}\prod_{e\in A\setminus T}u_e\right)\\
		&=\sum_{T: T\subseteq X}p^{-rk(T)}\left(\prod_{e\in T}(-u_e)\right)\mathcal{Z}_{\mathcal{M}_1|T}(q,-\mathbf{v})\mathcal{Z}_{\mathcal{M}_2/T}(p,\mathbf{u}).
	\end{align*}
	
	The second equation holds since $\mathcal{M}=\mathcal{M}_1\bullet\mathcal{M}_2=\mathcal{M}_2\bullet\mathcal{M}_1$.
\end{proof}

We write $\mathcal{Z}_{\mathcal{M}}(q,uv)$ for $\mathcal{Z}_{\mathcal{M}}(q,\mathbf{uv})$ if $u_e=u$ and $v_e=v$ for each $e\in X$ in an arithmetic matroid $\mathcal{M}=(X,rk,m)$ .
We have the following specialization of Theorem \ref{mtpc0}. 
\begin{theorem}\label{mtpc1}
	Let $\mathcal{M}_1=(X,rk,m_1)$ and $\mathcal{M}_2=(X,rk,m_2)$ be two arithmetic matroids, and let $\mathcal{M}=\mathcal{M}_1\bullet\mathcal{M}_2$. Then
	\begin{align*}
		\mathcal{Z}_{\mathcal{M}}(pq,uv)=&\sum_{T\subseteq X}p^{-rk(T)}(-u)^{|T|}\mathcal{Z}_{\mathcal{M}_1|T}(q,-v)\mathcal{Z}_{\mathcal{M}_2/T}(p,u)\\
		=&\sum_{T\subseteq X}p^{-rk(T)}(-u)^{|T|}\mathcal{Z}_{\mathcal{M}_2|T}(q,-v)\mathcal{Z}_{\mathcal{M}_1/T}(p,u).
	\end{align*}
\end{theorem}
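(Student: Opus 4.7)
The plan is to derive Theorem \ref{mtpc1} as an immediate specialization of Theorem \ref{mtpc0}. Since the statement explicitly introduces the notation $\mathcal{Z}_{\mathcal{M}}(q,uv)$ as shorthand for $\mathcal{Z}_{\mathcal{M}}(q,\mathbf{uv})$ under the uniform substitution $u_e = u$ and $v_e = v$ for every $e \in X$, the whole argument amounts to propagating this substitution through the identity already established.

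Concretely, I would start from the first equality of Theorem \ref{mtpc0} and set $u_e = u$ and $v_e = v$ for all $e \in X$. The product $\prod_{e\in T}(-u_e)$ then collapses to $(-u)^{|T|}$. Simultaneously, $\mathcal{Z}_{\mathcal{M}_1|T}(q,-\mathbf{v})$ becomes $\mathcal{Z}_{\mathcal{M}_1|T}(q,-v)$ and $\mathcal{Z}_{\mathcal{M}_2/T}(p,\mathbf{u})$ becomes $\mathcal{Z}_{\mathcal{M}_2/T}(p,u)$, by the very definition of the uniform-variable notation introduced just before the theorem. This yields the first claimed identity.

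For the second equality, I would simply swap the roles of $\mathcal{M}_1$ and $\mathcal{M}_2$, invoking the commutativity $\mathcal{M}_1 \bullet \mathcal{M}_2 = \mathcal{M}_2 \bullet \mathcal{M}_1$, which was also the justification used for the analogous second equality in Theorem \ref{mtpc0}.

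There is essentially no obstacle: the result is a pure notational specialization of the preceding theorem, and the proof is a one-line substitution followed by an appeal to the symmetry of the product $\bullet$. In writing it up I would keep the exposition to a couple of sentences rather than redoing the M\"obius-style computation from the previous proof.
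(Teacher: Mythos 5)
Your proposal is correct and matches the paper exactly: the paper presents Theorem \ref{mtpc1} without a separate proof, introducing it as ``the following specialization of Theorem \ref{mtpc0}'' obtained by setting $u_e=u$ and $v_e=v$ for all $e\in X$, which is precisely your one-line substitution argument. The appeal to $\mathcal{M}_1\bullet\mathcal{M}_2=\mathcal{M}_2\bullet\mathcal{M}_1$ for the second equality is likewise the same justification the paper uses.
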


For an arithmetic matroid $\mathcal{M}=(X,rk,m)$, we use $M$ to denote the underlying matroid $(X,rk)$ conveniently. Taking one of two multiplicity functions $m_1$ and $m_2$ in Theorems \ref{mtpc0} and \ref{mtpc1} to be trivial,  we obtain:

\begin{corollary}\label{mtpc2}
	Let $\mathcal{M}=(X,rk,m)$ be an arithmetic matroid. Then
	\begin{align*}
		\mathcal{Z}_{\mathcal{M}}(pq,\mathbf{uv})=&\sum_{T\subseteq X}p^{-rk(T)}\prod_{e\in T}(-u_e)\mathcal{Z}_{\mathcal{M}|T}(q,-\mathbf{v})Z_{M/T}(p,\mathbf{u})\\
		=&\sum_{T\subseteq X}p^{-rk(T)}\prod_{e\in T}(-u_e)Z_{M|T}(q,-\mathbf{v})\mathcal{Z}_{\mathcal{M}/T}(p,\mathbf{u}).
	\end{align*}
\end{corollary}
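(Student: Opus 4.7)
The plan is to derive this corollary as an immediate specialization of Theorem \ref{mtpc0}, obtained by taking the second factor to be the underlying matroid $M=(X,rk)$ equipped with the trivial multiplicity $m_0\equiv 1$. Writing $\mathcal{M}_0=(X,rk,m_0)$, we have $\mathcal{M}\bullet\mathcal{M}_0=\mathcal{M}$ because $m\cdot 1=m$. Applying Theorem \ref{mtpc0} to the pair $(\mathcal{M}_1,\mathcal{M}_2)=(\mathcal{M},\mathcal{M}_0)$ should then yield two identities: the first (involving $\mathcal{M}_1|T$ and $\mathcal{M}_2/T$) gives the first displayed equation of the corollary, while the second (involving $\mathcal{M}_2|T$ and $\mathcal{M}_1/T$) gives the second.

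To justify this specialization I would first verify that $\mathcal{M}_0$ is an arithmetic matroid, so that the hypotheses of Theorem \ref{mtpc0} are met. Axioms (1) and (2) hold trivially because every multiplicity value equals $1$; axioms (3) and (4) reduce to the elementary identity $\sum_{A\subseteq T\subseteq B}(-1)^{|T|-|A|}\in\{0,1\}$, which is nonnegative. Comparing (\ref{equ1.1}) and (\ref{equ1.2}), I would then observe that when the multiplicity is trivial the multivariate arithmetic Tutte polynomial collapses to Sokal's multivariate Tutte polynomial, so $\mathcal{Z}_{\mathcal{M}_0}(q,\mathbf{v})=Z_M(q,\mathbf{v})$. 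Moreover, the restriction and contraction of the trivial multiplicity are themselves trivial, as one sees directly from $m_{\mathcal{M}_0/T}(A)=m_0(A\cup T)=1$ and the analogous formula for restriction. Consequently the factors $\mathcal{Z}_{\mathcal{M}_0|T}$ and $\mathcal{Z}_{\mathcal{M}_0/T}$ appearing in Theorem \ref{mtpc0} reduce to $Z_{M|T}$ and $Z_{M/T}$, while the factors coming from $\mathcal{M}$ keep their arithmetic form, and the two claimed identities fall out by direct substitution.

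No substantive obstacle is expected: every ingredient is either a definition or has already been established in Theorem \ref{mtpc0}, and the argument is a pure specialization. The only point requiring any care is the routine verification that $\mathcal{M}_0$ satisfies the arithmetic-matroid axioms, after which both equations of the corollary drop out simultaneously from the two identities of Theorem \ref{mtpc0}.
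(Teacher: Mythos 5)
Your proposal is correct and follows exactly the paper's route: the paper obtains Corollary \ref{mtpc2} by taking one of the two multiplicities in Theorem \ref{mtpc0} to be trivial, which is precisely your specialization $(\mathcal{M}_1,\mathcal{M}_2)=(\mathcal{M},\mathcal{M}_0)$. Your extra check that the trivial multiplicity satisfies the arithmetic-matroid axioms is harmless but unnecessary, since (as the paper itself remarks) the axioms are never used in the proof of Theorem \ref{mtpc0}.
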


\begin{corollary}\label{CoroMaT}
	Let $\mathcal{M}=(X,rk,m)$ be an arithmetic matroid. Then
	\begin{align*}
		\mathcal{Z}_{\mathcal{M}}(pq,uv)=&\sum_{ T\subseteq X}p^{-rk(T)}(-u)^{|T|}\mathcal{Z}_{\mathcal{M}|T}(q,-v)Z_{M/T}(p,u)\\
		=&\sum_{T\subseteq X}p^{-rk(T)}(-u)^{|T|}Z_{\mathcal{M}|T}(q,-v)\mathcal{Z}_{M/T}(p,u).
	\end{align*}
\end{corollary}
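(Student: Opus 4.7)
The plan is to derive Corollary \ref{CoroMaT} as a direct univariate specialization of Corollary \ref{mtpc2}. Since that corollary has already established the convolution identity at the multivariate level for arbitrary edge weights $\{u_e\}_{e\in X}$ and $\{v_e\}_{e\in X}$, setting $u_e=u$ and $v_e=v$ simultaneously for every $e\in X$ on both sides should collapse the identity to the desired univariate form, with no further combinatorial argument required.

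Concretely, I would verify the specialization term by term. On the left, the substitution $u_e v_e \mapsto uv$ sends $\mathcal{Z}_{\mathcal{M}}(pq,\mathbf{uv})$ to $\mathcal{Z}_{\mathcal{M}}(pq,uv)$ via the notational convention introduced just before Theorem \ref{mtpc1}. On the right, the prefactor satisfies $\prod_{e\in T}(-u_e)=(-u)^{|T|}$; the factor $\mathcal{Z}_{\mathcal{M}|T}(q,-\mathbf{v})$ collapses to $\mathcal{Z}_{\mathcal{M}|T}(q,-v)$ because each inner product $\prod_{e\in B}(-v_e)$ reduces to $(-v)^{|B|}$ in the defining sum over $B\subseteq T$; and the factor $Z_{M/T}(p,\mathbf{u})$ collapses to $Z_{M/T}(p,u)$ because each inner product $\prod_{e\in A\setminus T}u_e$ reduces to $u^{|A|-|T|}$. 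Assembling these pieces recovers the first equality of Corollary \ref{CoroMaT}, and the second equality follows by the same substitution applied to the second equality of Corollary \ref{mtpc2}.

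I do not foresee any genuine obstacle: the argument is a routine termwise specialization with no combinatorial content beyond what was already invested in the proof of Corollary \ref{mtpc2} (and hence Theorem \ref{mtpc0}). The only small piece of book-keeping worth noting is that the factor $u^{|T|}$ coming from the explicit prefactor and the factor $u^{|A|-|T|}$ hidden inside $Z_{M/T}(p,\mathbf{u})$ arise independently and must not be conflated; once this is kept separate, the claimed identity drops out immediately.
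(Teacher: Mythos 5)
Your proof is correct and matches the paper's intent: Corollary \ref{CoroMaT} is obtained there as a routine specialization of the general convolution formula, the paper reaching it by taking a trivial multiplicity in Theorem \ref{mtpc1} while you specialize the variables in Corollary \ref{mtpc2}; the two routes commute and involve identical book-keeping. Your termwise verification (including keeping the prefactor $u^{|T|}$ separate from the $u$-powers inside $Z_{M/T}$) is exactly the check required, and it also makes visible the typographical slip in the paper's second displayed line, where $Z_{\mathcal{M}|T}$ and $\mathcal{Z}_{M/T}$ should read $Z_{M|T}$ and $\mathcal{Z}_{\mathcal{M}/T}$.
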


A generalization of the well-known convolution formula for the Tutte polynomial of a matroid was shown by Kung in \cite{Kung}; for details, see also \cite[Theorem 12.25]{Ellis-Monaghan} or \cite[Theorem 5.3]{Wang0}. Recently, an analogous
formula for the arithmetic Tutte polynomial of the product of two arithmetic matroids was
obtained
by Dupont, Fink and Moci in \cite{Dupont}. As an application of Theorem \ref{mtpc1}, we now give it a purely combinatorial
proof.

\begin{corollary}\emph{\cite{Dupont}}
	Let $\mathcal{M}_1=(X,rk,m_1)$ and $\mathcal{M}_2=(X,rk,m_2)$ be two arithmetic matroids, and let $\mathcal{M}=\mathcal{M}_1\bullet\mathcal{M}_2$. Then
	\begin{align*}
		&\mathfrak{M}_{\mathcal{M}}(1+ab,1+cd)\\
		&=\sum_{A\subseteq X}a^{rk(X)-rk(A)}(-d)^{|A|-rk(A)} \mathfrak{M}_{\mathcal{M}_1|A}(1-a,1-c)\mathfrak{M}_{\mathcal{M}_2/A}(1+b,1+d)\\
		&=\sum_{A\subseteq X}a^{rk(X)-rk(A)}(-d)^{|A|-rk(A)} \mathfrak{M}_{\mathcal{M}_2|A}(1-a,1-c)\mathfrak{M}_{\mathcal{M}_1/A}(1+b,1+d).
	\end{align*} 
\end{corollary}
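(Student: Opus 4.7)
The plan is to derive the identity as a direct specialization of Theorem \ref{mtpc1} through a change of variables, with no further combinatorial input needed. First I would record the bridge between the two polynomials: comparing the definitions (with $v_e=v$ for every $e$), one has
\[\mathfrak{M}_{\mathcal{M}}(x,y) = (x-1)^{rk(X)}\,\mathcal{Z}_{\mathcal{M}}\bigl((x-1)(y-1),\,y-1\bigr),\]
and the analogous formula holds for the restriction $\mathcal{M}_1|T$ (of rank $rk(T)$) and the contraction $\mathcal{M}_2/T$ (of rank $rk(X)-rk(T)$). Thus every arithmetic Tutte polynomial appearing on either side of the target may be rewritten as a monomial times a specialization of $\mathcal{Z}$.

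Next I would choose the substitution $p=bd$, $q=ac$, $u=d$, $v=c$ in Theorem \ref{mtpc1}, so that $pq=abcd=(x-1)(y-1)$ and $uv=cd=y-1$ with $x=1+ab$, $y=1+cd$. Under this substitution, the left-hand side of Theorem \ref{mtpc1} equals $(ab)^{-rk(X)}\mathfrak{M}_{\mathcal{M}}(1+ab,1+cd)$; on the right, the inner factors become
\[\mathcal{Z}_{\mathcal{M}_1|T}(ac,-c)=(-a)^{-rk(T)}\mathfrak{M}_{\mathcal{M}_1|T}(1-a,1-c)\]
and
\[\mathcal{Z}_{\mathcal{M}_2/T}(bd,d)=b^{rk(T)-rk(X)}\mathfrak{M}_{\mathcal{M}_2/T}(1+b,1+d).\]
Multiplying both sides of the resulting identity by $(ab)^{rk(X)}$ produces an expression of the target form.

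The remaining step is pure sign-and-exponent bookkeeping. The $b$-powers telescope, since $rk(X)-rk(T)+(rk(T)-rk(X))=0$. The accumulated $a$-contribution is $a^{rk(X)}(-a)^{-rk(T)}=(-1)^{rk(T)}a^{rk(X)-rk(T)}$, while the $d$-contribution from the factor $(bd)^{-rk(T)}(-d)^{|T|}$ is $(-1)^{|T|}d^{|T|-rk(T)}$. Using $(-1)^{rk(T)+|T|}=(-1)^{|T|-rk(T)}$, these signs combine to give the weight $a^{rk(X)-rk(T)}(-d)^{|T|-rk(T)}$, which matches the claim exactly; renaming $T$ as $A$ yields the first equality. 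The second equality follows by the same argument applied to the symmetric form of Theorem \ref{mtpc1} with $\mathcal{M}_1$ and $\mathcal{M}_2$ interchanged. The main obstacle is merely keeping the four substitutions and the parities straight; all combinatorial content is already captured by the inclusion-exclusion step in the proof of Theorem \ref{mtpc1}.
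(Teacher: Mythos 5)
Your proposal is correct and follows essentially the same route as the paper: the same bridge identity $\mathfrak{M}_{\mathcal{M}}(x,y)=(x-1)^{rk(X)}\mathcal{Z}_{\mathcal{M}}((x-1)(y-1),y-1)$, the same substitution $p=bd$, $q=ac$, $u=d$, $v=c$ in Theorem \ref{mtpc1}, and the same sign-and-exponent bookkeeping using $rk(X)=rk(A)+rk(\mathcal{M}_2/A)$. No gaps.
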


\begin{proof}
	Recall that       \begin{align}\label{equ}
		\mathfrak{M}_{\mathcal{M}}(x,y)=(x-1)^{rk(X)}\mathcal{Z}_{\mathcal{M}}((x-1)(y-1),y-1).
	\end{align}
	Then we have
	\[\mathfrak{M}_{\mathcal{M}}(1+ab,1+cd)=(ab)^{rk(X)}\mathcal{Z}_{\mathcal{M}}(abcd,cd).\]
	Taking $p=bd$, $q=ac$, $u=d$ and $v=c$, 
	by Theorem \ref{mtpc1}, we have
	\begin{align*}
		\mathcal{Z}_{\mathcal{M}}(abcd,cd)=&\sum_{A\subseteq X}(bd)^{-rk(A)}(-d)^{|A|}\mathcal{Z}_{\mathcal{M}_1|A}(ac,-c)\mathcal{Z}_{\mathcal{M}_2/A}(bd,d).
	\end{align*}
	By Eq. (\ref{equ}), taking $x=1-a$ and $y=1-c$, we have
	\[\mathcal{Z}_{\mathcal{M}_1|A}(ac,-c)=(-a)^{-rk(A)}\mathfrak{M}_{\mathcal{M}_1|A}(1-a,1-c),\]
	and taking $x=1+b$ and $y=1+d$, we have
	\[\mathcal{Z}_{\mathcal{M}_2/A}(bd,d)=b^{-rk(\mathcal{M}_2/A)}\mathfrak{M}_{\mathcal{M}_2/A}(1+b,1+d).\]
	Therefore
	\begin{align*}
		&\mathfrak{M}_{\mathcal{M}}(1+ab,1+cd)\\
		&=(ab)^{rk(X)}\sum_{A\subseteq X}(bd)^{-rk(A)}(-d)^{|A|}(-a)^{-rk(A)} b^{-rk(\mathcal{M}_2/A)}\mathfrak{M}_{\mathcal{M}_1|A}(1-a,1-c)\mathfrak{M}_{\mathcal{M}_2/A}(1+b,1+d)\\
		&=b^{rk(X)}\sum_{A\subseteq X}b^{-rk(A)-rk(\mathcal{M}_2/A)}a^{rk(X)-rk(A)}(-d)^{|A|-rk(A)} \mathfrak{M}_{\mathcal{M}_1|A}(1-a,1-c)\mathfrak{M}_{\mathcal{M}_2/A}(1+b,1+d).
	\end{align*}
	Note that $rk(X)=rk(A)+rk(\mathcal{M}_2/A)$. Thus the first equation
	holds.
	The second equation holds since $\mathcal{M}=\mathcal{M}_1\bullet\mathcal{M}_2=\mathcal{M}_2\bullet\mathcal{M}_1$. 
\end{proof}

We note that four axioms in the definition of arithmetic matroids are not used in the proofs. Therefore the above results also hold for multiplicity matroids. 

Recently, Backman and Lenz \cite{Backman} and Dupont, Fink and Moci \cite{Dupont}
obtained the following formula. We apply Theorem \ref{mtpc1} to give it a new and simple proof.

\begin{corollary}\emph{\cite{Backman,Dupont}}\label{BackmanCF}
	Let $\mathcal{M}_1=(X,rk,m_1)$ and $\mathcal{M}_2=(X,rk,m_2)$ be two multiplicity matroids, and let $\mathcal{M}=\mathcal{M}_1\bullet\mathcal{M}_2$. Then
	\begin{align*}
		\mathfrak{M}_{\mathcal{M}}(x,y)&=\sum_{A\subseteq X} \mathfrak{M}_{\mathcal{M}_1|A}(0,y)\mathfrak{M}_{\mathcal{M}_2/A}(x,0)\\
		&=\sum_{A\subseteq X} \mathfrak{M}_{\mathcal{M}_2|A}(0,y)\mathfrak{M}_{\mathcal{M}_1/A}(x,0).
	\end{align*}
\end{corollary}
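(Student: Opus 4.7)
The plan is to derive the claimed identity directly from Theorem \ref{mtpc1} by choosing the right substitution in the $\mathcal{Z}$-variables, following the same template that was just used in the preceding corollary (Dupont--Fink--Moci). The bridge between $\mathfrak{M}$ and $\mathcal{Z}$ is the relation
\[
\mathfrak{M}_{\mathcal{N}}(x,y)=(x-1)^{rk(X(\mathcal{N}))}\,\mathcal{Z}_{\mathcal{N}}\bigl((x-1)(y-1),\,y-1\bigr),
\]
applied to $\mathcal{N}=\mathcal{M}$, $\mathcal{M}_1|A$ and $\mathcal{M}_2/A$ at suitable arguments.

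First I would reverse-engineer the substitution. The pieces appearing on the right-hand side of the corollary are $\mathfrak{M}_{\mathcal{M}_1|A}(0,y)$ and $\mathfrak{M}_{\mathcal{M}_2/A}(x,0)$, which in $\mathcal{Z}$-language read
\[
\mathfrak{M}_{\mathcal{M}_1|A}(0,y)=(-1)^{rk(A)}\mathcal{Z}_{\mathcal{M}_1|A}(1-y,\,y-1),\qquad
\mathfrak{M}_{\mathcal{M}_2/A}(x,0)=(x-1)^{rk(\mathcal{M}_2/A)}\mathcal{Z}_{\mathcal{M}_2/A}(1-x,\,-1).
\]
Comparing with the summand $\mathcal{Z}_{\mathcal{M}_1|T}(q,-v)\,\mathcal{Z}_{\mathcal{M}_2/T}(p,u)$ in Theorem \ref{mtpc1}, the natural choice is $q=1-y$, $v=1-y$, $p=1-x$, $u=-1$. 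Then $pq=(1-x)(1-y)=(x-1)(y-1)$ and $uv=y-1$, so the left-hand side $\mathcal{Z}_{\mathcal{M}}(pq,uv)$ is exactly $(x-1)^{-rk(X)}\mathfrak{M}_{\mathcal{M}}(x,y)$.

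Next I would plug these values into Theorem \ref{mtpc1}. A pleasant cancellation occurs: the factor $(-u)^{|T|}$ becomes $1^{|T|}=1$, so sign and cardinality-exponent contributions both disappear from the summation. What remains, after multiplying both sides by $(x-1)^{rk(X)}$, is
\[
\mathfrak{M}_{\mathcal{M}}(x,y)=\sum_{A\subseteq X}(x-1)^{rk(X)}(1-x)^{-rk(A)}\mathcal{Z}_{\mathcal{M}_1|A}(1-y,y-1)\,\mathcal{Z}_{\mathcal{M}_2/A}(1-x,-1).
\]
Using $(1-x)^{-rk(A)}=(-1)^{rk(A)}(x-1)^{-rk(A)}$ and $rk(X)-rk(A)=rk(\mathcal{M}_2/A)$, the prefactor splits as $(-1)^{rk(A)}(x-1)^{rk(\mathcal{M}_2/A)}$, which is precisely what is needed to absorb the two $\mathcal{Z}$-factors into $\mathfrak{M}_{\mathcal{M}_1|A}(0,y)\,\mathfrak{M}_{\mathcal{M}_2/A}(x,0)$ via the two displayed identities above. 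The second form of the corollary follows from the symmetry $\mathcal{M}_1\bullet\mathcal{M}_2=\mathcal{M}_2\bullet\mathcal{M}_1$.

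No real obstacle is expected; this is a straightforward change of variables in a polynomial identity. The only thing to be careful about is bookkeeping of signs and of the exponent shuffle $(-1)^{rk(A)}$ versus $(1-x)^{-rk(A)}$ versus $(x-1)^{rk(X)}$, so that the $\mathfrak{M}$-to-$\mathcal{Z}$ prefactors reassemble exactly at the end. The point worth emphasizing in the write-up is that the choice $u=-1$ is what kills the $(-u)^{|T|}$ factor, which is why the sum in the Backman--Lenz formula has no weights and looks cleaner than the Dupont--Fink--Moci formula above.
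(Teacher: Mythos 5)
Your proof is correct and follows essentially the same route as the paper: the identical substitution $p=1-x$, $q=1-y$, $u=-1$, $v=1-y$ in Theorem \ref{mtpc1}, multiplication by $(x-1)^{rk(X)}$, the identity $rk(X)-rk(A)=rk(\mathcal{M}_2/A)$ to reassemble the prefactors, and the symmetry of $\bullet$ for the second equation. The sign bookkeeping checks out, so there is nothing to add.
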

\begin{proof}
	By setting $p=1-x$, $q=1-y$, $u=-1$ and $v=1-y$ and multiplying by $(x-1)^{rk(X)}$ into two equations in Theorem \ref{mtpc1}, we have 
	\begin{align*}
		&(x-1)^{rk(X)}\mathcal{Z}_{\mathcal{M}}((1-x)(1-y),y-1)\\
		&=\sum_{ A\subseteq X}(-1)^{-rk(A)}\mathcal{Z}_{\mathcal{M}_1|A}(1-y,y-1)\cdot(x-1)^{rk(X)-rk(A)}\mathcal{Z}_{\mathcal{M}_2/A}(1-x,-1)\\
		&=\sum_{A\subseteq X}(-1)^{-rk(A)}\mathcal{Z}_{\mathcal{M}_2|A}(1-y,y-1)\cdot(x-1)^{rk(X)-rk(A)}\mathcal{Z}_{\mathcal{M}_1/A}(1-x,-1).
	\end{align*} 
	Note that $rk(X)-rk(A)=rk(M/A)$. Thus, by Eq. (\ref{equ}), the result is established.   
\end{proof}

In \cite{Backman}, Backman and Lenz also obtained the following  formula for a multiplicity matroid $\mathcal{M}=(X,rk,m)$:
\begin{align*}
	\mathfrak{M}_{\mathcal{M}}(x,y)&=\sum_{A\subseteq X} \mathfrak{M}_{\mathcal{M}|A}(0,y)T_{M/A}(x,0)\\
	&=\sum_{A\subseteq X} T_{M|A}(0,y)\mathfrak{M}_{\mathcal{M}/A}(x,0).
\end{align*}
Using the technique similar to the proof of Corollary \ref{BackmanCF}, Corollary \ref{CoroMaT} gives a purely combinatorial
proof of this formula. Certainly, it can be also proved by taking one of two multiplicity functions $m_1$ and $m_2$ in Corollary \ref{BackmanCF} to be trivial.

In \cite{Wang}, Wang, Yeh and Zhou defined the characteristic polynomial $\chi_{\mathcal{M}}(\lambda)$ of an arithmetic matroid $\mathcal{M}=(X,rk,m)$ as follows:
\[\chi_{\mathcal{M}}(\lambda)=\sum_{A\subseteq X}(-1)^{|A|}m(A)\lambda^{rk(X)-rk(A)}.\] 
Note that if the multiplicity $m$ of an arithmetic matroid $\mathcal{M}=(X,rk,m)$ is trivial, then the characteristic polynomial $\chi_{\mathcal{M}}(\lambda)$ of $\mathcal{M}$ specializes to the classical characteristic polynomial $\chi_{M}(\lambda)$ of $M$. It is easy to see that 
\begin{align}\label{chrelation1}
	\chi_{\mathcal{M}}(\lambda)=\lambda^{rk(X)}\mathcal{Z}_{\mathcal{M}}(\lambda,-1)
\end{align}     
and 
\begin{align}\label{chrelation2}
	\chi_{M}(\lambda)=\lambda^{rk(X)}Z_{M}(\lambda,-1).
\end{align}     

Applying Corollary \ref{CoroMaT},
we have the following convolution formula for the characteristic polynomial of an arithmetic matroid.\begin{theorem}
	Let $\mathcal{M}=(X,rk,m)$ be an arithmetic matroid. Then
	\begin{align*}
		\chi_{\mathcal{M}}(\lambda\xi)&=\sum_{A\subseteq X}\lambda^{rk(X)-rk(A)}\chi_{\mathcal{M}|A}(\lambda)\chi_{M/A}(\xi)\\
		&=\sum_{A\subseteq X}\lambda^{rk(X)-rk(A)}\chi_{M|A}(\lambda)\chi_{\mathcal{M}/A}(\xi).
	\end{align*}
\end{theorem}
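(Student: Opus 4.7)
The plan is to derive both identities directly from Corollary \ref{CoroMaT} by choosing substitutions that convert each multivariate arithmetic (or classical) Tutte polynomial into the corresponding characteristic polynomial via the relations (\ref{chrelation1}) and (\ref{chrelation2}). The target equation $\chi_{\mathcal{M}}(\lambda\xi)=(\lambda\xi)^{rk(X)}\mathcal{Z}_{\mathcal{M}}(\lambda\xi,-1)$ tells us we must arrange $pq=\lambda\xi$ and $uv=-1$ in Corollary \ref{CoroMaT}, while simultaneously arranging the second-coordinate inputs of the inner factors to be $-1$ so that (\ref{chrelation1}) and (\ref{chrelation2}) apply.

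The natural choice is $p=\xi$, $q=\lambda$, $u=-1$, $v=1$. Then $pq=\lambda\xi$, $uv=-1$, and the factor $(-u)^{|T|}$ becomes $1^{|T|}=1$, while $\mathcal{Z}_{\mathcal{M}|T}(q,-v)=\mathcal{Z}_{\mathcal{M}|T}(\lambda,-1)$ and $Z_{M/T}(p,u)=Z_{M/T}(\xi,-1)$. Applying (\ref{chrelation1}) to $\mathcal{M}|T$ (whose rank equals $rk(T)$) and (\ref{chrelation2}) to $M/T$ (whose rank equals $rk(X)-rk(T)$), these factors become $\lambda^{-rk(T)}\chi_{\mathcal{M}|T}(\lambda)$ and $\xi^{-(rk(X)-rk(T))}\chi_{M/T}(\xi)$, respectively.

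Substituting into the first equation of Corollary \ref{CoroMaT}, the $\xi$-exponents collapse to a single global factor $\xi^{-rk(X)}$, giving
\begin{align*}
\mathcal{Z}_{\mathcal{M}}(\lambda\xi,-1)=\xi^{-rk(X)}\sum_{A\subseteq X}\lambda^{-rk(A)}\chi_{\mathcal{M}|A}(\lambda)\chi_{M/A}(\xi).
\end{align*}
Multiplying by $(\lambda\xi)^{rk(X)}$ and invoking (\ref{chrelation1}) on the left yields
\begin{align*}
\chi_{\mathcal{M}}(\lambda\xi)=\sum_{A\subseteq X}\lambda^{rk(X)-rk(A)}\chi_{\mathcal{M}|A}(\lambda)\chi_{M/A}(\xi),
\end{align*}
which is the first stated identity. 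The second identity follows by performing exactly the same substitution on the second equation of Corollary \ref{CoroMaT}, which exchanges the roles of $\mathcal{M}|T$ and $\mathcal{M}/T$ with $M|T$ and $M/T$.

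There is no real obstacle beyond bookkeeping: the only step requiring attention is making sure the ranks used when applying (\ref{chrelation1}) and (\ref{chrelation2}) are the correct ground-set ranks of $\mathcal{M}|T$ and $M/T$, so that the exponents of $\lambda$ and $\xi$ collect properly into $\lambda^{rk(X)-rk(A)}$ after multiplying through by $(\lambda\xi)^{rk(X)}$.
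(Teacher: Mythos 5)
Your proposal is correct and follows exactly the paper's own argument: the same substitution $p=\xi$, $q=\lambda$, $u=-1$, $v=1$ into Corollary \ref{CoroMaT}, followed by applying Eqs.~(\ref{chrelation1}) and (\ref{chrelation2}) with the ranks $rk(T)$ and $rk(X)-rk(T)$ and multiplying through by $(\lambda\xi)^{rk(X)}$. The bookkeeping of exponents is handled correctly, so there is nothing to add.
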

\begin{proof}
	By setting $p=\xi$, $q=\lambda$, $u=-1$ and $v=1$ and multiplying by $(\lambda\xi)^{rk(X)}$ into two equations in Corollary \ref{CoroMaT}, we have 
	\begin{align*}
		(\lambda\xi)^{rk(X)}\mathcal{Z}_{\mathcal{M}}(\lambda\xi,-1)=&^{}\sum_{ A\subseteq X}\lambda^{rk(X)}\mathcal{Z}_{\mathcal{M}|A}(\lambda,-1)\cdot\xi^{rk(X)-rk(A)}Z_{M/A}(\xi,-1)\\
		=&\sum_{A\subseteq X} \lambda^{rk(X)}Z_{\mathcal{M}|A}(\lambda,-1)\cdot\xi^{rk(X)-rk(A)}\mathcal{Z}_{M/A}(\xi,-1).
	\end{align*} 
	Note that $rk(X)=rk(A)+rk(M/A)$. Thus the equations hold from Eqs. (\ref{chrelation1}) and (\ref{chrelation2}).  
\end{proof}

\section*{Acknowledgements}
This work is supported by National Natural Science Foundation of China
(No. 12171402).


\end{document}